\providecommand{\U}[1]{\protect\rule{.1in}{.1in}}
\newtheorem{theorem}{Theorem}
\newtheorem{algorithm}[theorem]{Algorithm}
\newtheorem{lemma}[theorem]{Lemma}
\newtheorem{proposition}[theorem]{Proposition}
\newtheorem{remark}[theorem]{Remark}
\theoremstyle{definition}
\numberwithin{equation}{section}
\newcommand{\resumename}{R\'esum\'e}
\begin{document}
\date{\today}
\title[Decomposition]{Computing Vergne Polarizing Subalgebras}
\author[V. Oussa]{Vignon Oussa}
\address{(4): Dept.\ of Mathematics\\
Bridgewater State University\\
Bridgewater, MA 02324 U.S.A.\\}
\email{Vignon.Oussa@bridgew.edu}
\keywords{Polarizing, subalgebra, nilpotent}
\subjclass[2000]{22E25, 22E27}

\maketitle

\begin{abstract}
According to the orbit method, the construction of a unitary irreducible
representation of a nilpotent Lie group requires a precise computation of some
polarizing subalgebra subordinated to a linear functional in the linear dual
of the corresponding Lie algebra. This important step is generally challenging
from a computational viewpoint. In this paper, we provide an algorithmic
approach to the construction of the well-known Vergne polarizing subalgebras
\cite{Corwin}. The algorithms presented in this paper are specifically
designed so that they can be implemented in Computer Algebra Systems. We also
show there are instances where Vergne's construction could be refined for the
sake of efficiency. Finally, we adapt our refined procedure to free nilpotent
finite-dimensional Lie algebras of step-two to obtain simple and precise
descriptions of Vergne polarizing algebras corresponding to all linear
functionals in a dense open subset of the linear dual of the corresponding
Lie algebra. Also, a
program written for Mathematica is presented at the end of the paper.

\end{abstract}




\section{Introduction}

The Lie brackets of nilpotent Lie algebras have some natural combinatorial
structures which make this class of algebras very appealing; both from
theoretical and computational aspects. The main purpose of this paper is to
reconcile some of the theoretical nature of harmonic analysis on
nilpotent Lie groups with some of the computations involved in the process of
constructing unitary irreducible representations.  It is worth noticing that Pedersen has written several programs in REDUCE to compute polarizing subalgebras and canonical coordinates for all nilpotent Lie algebras of dimensions less than seven. In fact, Michel Duflo sent us copies of a companion manuscript to \cite{Pedersen} which contains outputs of his programs. Since Pedersen's programs were written a while ago, it is appropriate that we reintroduce some of his ideas through some more current technology. Let $\mathfrak{g}$ be a real
nilpotent Lie algebra. We denote by $G$ the connected simply connected Lie
group corresponding to $\mathfrak{g}$ such that $G=\exp\left(  \mathfrak{g}%
\right)  .$ In this case, $G$ is called a nilpotent Lie group. The problem of
classification of the irreducible unitary representations of $G$ is well
understood \cite{Corwin}. Let $\mathfrak{p}$ be a subalgebra of $\mathfrak{g}$
and let $\ell\in\mathfrak{g}^{\ast}$. We say that $\mathfrak{p=p}\left(
\ell\right)  $ is subordinated to $\ell$ if $
\ell\left[  \mathfrak{p}\left(  \ell\right)  \mathfrak{,p}\left(  \ell\right)
\right]  =0.$ As a result, the formula $
\chi_{\ell}\left(  \exp X\right)  =\exp\left(  2\pi i\ell\left(  X\right)
\right)$ defines a unitary character on $\exp\left(  \mathfrak{p}\left(  \ell\right)
\right)  $. The induced representation $\mathrm{Ind}_{\exp\mathfrak{p}\left(
\ell\right)  }^{G}\left(  \chi_{\ell}\right) $ is an irreducible
representation acting in the Hilbert space $L^{2}\left(  \frac{G}%
{\exp\mathfrak{p}\left(  \ell\right)  }\right)$ if and only if
$\mathfrak{p}\left(  \ell\right)  $ is a \textbf{maximal isotropic subspace}
for the skew-symmetric bilinear form $B_{\ell}$ defined by $B_{\ell}\left(
X,Y\right)  =\ell\left[  X,Y\right]  .$ Such algebra is called a
\textbf{polarizing subalgebra} \cite{Corwin, Plancherel} for the linear
functional $\ell$ and, it is well-known that the condition that $\mathfrak{p}%
\left(  \ell\right)  $ is a maximal isotropic subalgebra is equivalent to

\begin{enumerate}
\item $\left[  \mathfrak{p}\left(  \ell\right)  ,\mathfrak{p}\left(
\ell\right)  \right]  \subset\ker\left(  \ell\right)  $

\item $\dim\mathfrak{p}\left(  \ell\right)  =n-d$ where $
\dim\left(  G\cdot\ell\right)  =2d.$
\end{enumerate}
The notation $\cdot$ is the coadjoint action of $G$ on $\mathfrak{g}^{\ast}$
which is defined as follows:%
\[
\exp X\cdot\ell\left(  Y\right)  =\ell\left(  Ad_{\exp\left(  -X\right)
}Y\right)  =\ell\left(  \exp\left(  ad_{-X}\right)  Y\right)  .
\]
One of the complications of the representation theory of
nilpotent Lie groups is that the algebra $\mathfrak{p}\left(  \ell\right)  $
is not generally unique, and there is no canonical way of constructing it.
However, there exist well-known procedures among which Vergne's construction
(\cite{Corwin} Theorem $1.3.5)$ is probably the most known. The work presented
in this paper is two-fold. In the second section of this paper, we revisit
Vergne's construction and we show that under specific conditions, the
description of Vergne is refinable. In the third section, we adapt our results
to the class of finite-dimensional free nilpotent Lie algebras of step-two. We
are then able to derive very simple descriptions of Vergne polarizing
subalgebras for all linear functionals in a dense open subset of
$\mathfrak{g}^{\ast}.$ At the end of the paper, we also provide a program written in Mathematica to compute the polarizing subalgebra of any given nilpotent Lie algebra. 

\section{Polarizing Algebras}

Let $Z_{1},Z_{2},\cdots,Z_{n}$ be a fixed strong Malcev basis \cite{Corwin} for the Lie
algebra $\mathfrak{g}$ passing through the center of $\mathfrak{g.}$ Let
\[
\left(  0\right)  \subseteq\mathfrak{g}_{1}\subseteq\mathfrak{g}_{2}%
\subseteq\cdots\subseteq\mathfrak{g}_{n}%
\]
be a chain of ideals in $\mathfrak{g}$ such that $\dim\mathfrak{g}_{j}=j.$
Given $\ell\in\mathfrak{g}^{\ast},$ let $\ell_{j}=\ell|\mathfrak{g}_{j}$ where
$\ell_{j}$ is the restriction of the linear functional $\ell$ to the vector
space $\mathfrak{g}_{j}.$ Then, the polarizing subalgebra subordinated to the
linear functional $\ell$ is described as follows
\[
\mathfrak{p}\left(  \ell\right)  =%
{\displaystyle\sum\limits_{j=1}^{n}}
\mathfrak{r}\left(  \ell_{j}\right)
\]
where $\mathfrak{r}\left(  \ell_{j}\right)  $ is the radical of the
skew-symmetric bilinear form $B_{\ell_{j}}.$ By definition
\begin{equation}
\mathfrak{r}\left(  \ell_{j}\right)  =\left\{  Y\in\mathfrak{g}_{j}%
\mathfrak{:}\text{ }B_{\ell_{j}}\left(  X,Y\right)  =0\text{ for all }%
X\in\mathfrak{g}_{j}\right\}.  \label{rad}%
\end{equation}
A direct consequence of (\ref{rad}) is that
\begin{equation}
\mathfrak{p}\left(  \ell\right)  ={\displaystyle\sum\limits_{j=1}^{n}}\text{\textrm{nullspace}}\left(  M\left(  \ell_{j}\right)  \right)
\label{pol}%
\end{equation}
where $M\left(  \ell_{j}\right)  $ is a skew-symmetric matrix of order $j$
given by
\[
M\left(  \ell_{j}\right)  =\left[
\begin{array}
[c]{cccc}%
0 & \ell_{j}\left(  \left[  Z_{1},Z_{2}\right]  \right)   & \cdots & \ell
_{j}\left(  \left[  Z_{1},Z_{j}\right]  \right)  \\
\ell_{j}\left(  \left[  Z_{2},Z_{1}\right]  \right)   & 0 & \cdots & \ell
_{j}\left(  \left[  Z_{2},Z_{j}\right]  \right)  \\
\vdots & \vdots & \ddots & \vdots\\
\ell_{j}\left(  \left[  Z_{j-1},Z_{1}\right]  \right)   & \ell_{j}\left(
\left[  Z_{j-1},Z_{2}\right]  \right)   & \cdots & \ell_{j}\left(  \left[
Z_{j-1},Z_{j}\right]  \right)  \\
\ell_{j}\left(  \left[  Z_{j},Z_{1}\right]  \right)   & \ell_{j}\left(
\left[  Z_{j},Z_{2}\right]  \right)   & \cdots & 0
\end{array}
\right]  .
\]
It is worth mentioning that for Lie algebras of arbitrarily large dimensions,
it is a very difficult and tedious task to compute Formula \ref{rad} by hands.
Therefore, there is a need to provide methods for the construction of Vergne
polarizing subalgebras that can be implemented in Computer Algebra Systems and
on other computational platforms. From a computational point of view, Formula
\ref{pol} is slightly more appealing because of the use of linear algebra
terms. Moreover, we should point out that the null-space of $M\left(  \ell
_{j}\right)  $ is obtained with respect to the fixed strong Malcev basis $Z_{1},Z_{2},\cdots,Z_{j}$ for the Lie algebra $\mathfrak{g.}$

Let us now provide an algorithm for the construction of a Vergne polarizing
subalgebra subordinated to an arbitrarily linear functional $\ell\in
\mathfrak{g}^{\ast}.$

\begin{algorithm}
\label{alg}Let
\[
\ell\in\mathfrak{g}^{\ast}.
\]
First, we set
\begin{equation}
\mathbf{M}\left(  \ell\right)  =\left[
\begin{array}
[c]{ccccc}%
0 & \ell\left[  Z_{1},Z_{2}\right]  & \cdots & \ell\left[  Z_{1}%
,Z_{n-1}\right]  & \ell\left[  Z_{1},Z_{n}\right] \\
-\ell\left[  Z_{1},Z_{2}\right]  & 0 & \cdots & \ell\left[  Z_{2}%
,Z_{n-1}\right]  & \ell\left[  Z_{2},Z_{n}\right] \\
\vdots & \vdots & \ddots & \vdots & \vdots\\
-\ell\left[  Z_{1},Z_{n-1}\right]  & -\ell\left[  Z_{2},Z_{n-1}\right]  &
\cdots & 0 & \ell\left[  Z_{n-1},Z_{n}\right] \\
-\ell\left[  Z_{1},Z_{n}\right]  & -\ell\left[  Z_{2},Z_{n}\right]  & \cdots &
-\ell\left[  Z_{n-1},Z_{n}\right]  & 0
\end{array}
\right]  \label{MatrixM}%
\end{equation}
such that $\mathbf{M}\left(  \ell\right)  $ is a singular skew-symmetric
matrix of order $n.$ Next, we define a submatrix of $\mathbf{M}\left(
\ell\right)  $ of order $j$ as follows:
\[
\mathbf{M}_{j}\left(  \ell\right)  =\left[
\begin{array}
[c]{ccccc}%
0 & \ell\left[  Z_{1},Z_{2}\right]  & \cdots & \ell\left[  Z_{1}%
,Z_{j-1}\right]  & \ell\left[  Z_{1},Z_{j}\right] \\
-\ell\left[  Z_{1},Z_{2}\right]  & 0 & \cdots & \ell\left[  Z_{2}%
,Z_{j-1}\right]  & \ell\left[  Z_{2},Z_{j}\right] \\
\vdots & \vdots & \ddots & \vdots & \vdots\\
-\ell\left[  Z_{1},Z_{j-1}\right]  & -\ell\left[  Z_{2},Z_{j-1}\right]  &
\cdots & 0 & \ell\left[  Z_{j-1},Z_{j}\right] \\
-\ell\left[  Z_{1},Z_{j}\right]  & -\ell\left[  Z_{2},Z_{j}\right]  & \cdots &
-\ell\left[  Z_{j-1},Z_{j}\right]  & 0
\end{array}
\right]  .
\]
Finally
\[
\mathfrak{p}\left(  \ell\right)  =\sum_{k=1}^{n}\mathrm{nullspace}\left(
\mathbf{M}_{j}\left(  \ell\right)  \right)  .
\]

\end{algorithm}

\begin{remark}
Although, it is fairly easy to implement Algorithm \ref{alg}, it is not
generally the most efficient way to compute the algebra $\mathfrak{p}\left(
\ell\right)  .$ In the remainder of this section, we will provide a refined
version of Algorithm \ref{alg}.
\end{remark}

Let $\mathfrak{z}\left(  \mathfrak{g}\right)  $ be the central ideal for the
Lie algebra $\mathfrak{g.}$

\begin{lemma}
If $j\leq\dim\mathfrak{z}\left(  \mathfrak{g}\right)  +1$ then
$\mathrm{nullspace}\left(  M\left(  \ell_{j}\right)  \right)  =%
\mathbb{R}
Z_{1}+\cdots+%
\mathbb{R}
Z_{j}.$
\end{lemma}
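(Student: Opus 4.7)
The plan is to argue that under the hypothesis $j \leq \dim \mathfrak{z}(\mathfrak{g})+1$, the matrix $M(\ell_j)$ is identically the zero matrix, whence its nullspace is all of $\mathbb{R}^j$, which, via the identification coming from the basis $Z_1,\dots,Z_j$, is precisely $\mathbb{R}Z_1+\cdots+\mathbb{R}Z_j$.

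First I would set $k = \dim \mathfrak{z}(\mathfrak{g})$ and invoke the defining property of a strong Malcev basis passing through the center: the first $k$ vectors $Z_1,\dots,Z_k$ form a basis of $\mathfrak{z}(\mathfrak{g})$. In particular, $[Z_a, Z_b] = 0$ whenever $\min(a,b) \leq k$.

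Next I would examine the entries $\ell([Z_a, Z_b])$ of $M(\ell_j)$ for $1 \leq a, b \leq j$. Under the assumption $j \leq k+1$, for any such pair at least one index is $\leq k$: if $j \leq k$ this is automatic, and if $j = k+1$ then any pair $(a,b)$ with $a \neq b$ still has $\min(a,b) \leq k$ (diagonal entries are $0$ anyway by skew-symmetry). Hence $[Z_a, Z_b]=0$ for every pair of indices in the range considered, and consequently every entry of $M(\ell_j)$ vanishes.

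Finally I would conclude that $M(\ell_j)$ is the zero matrix of order $j$, so $\mathrm{nullspace}(M(\ell_j)) = \mathbb{R}^j$, which corresponds under the basis $Z_1,\dots,Z_j$ to $\mathbb{R}Z_1 + \cdots + \mathbb{R}Z_j$, as claimed. There is really no obstacle here; the statement is essentially a bookkeeping observation exploiting the strong Malcev basis assumption, and the only subtlety is handling the boundary case $j = k+1$, which works because one central factor in a bracket forces it to vanish.
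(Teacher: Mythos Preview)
Your proposal is correct and follows exactly the same approach as the paper: both argue that under the hypothesis the matrix $M(\ell_j)$ is the zero matrix, so its nullspace is all of $\mathfrak{g}_j=\mathbb{R}Z_1+\cdots+\mathbb{R}Z_j$. You simply supply more detail on why every bracket $[Z_a,Z_b]$ vanishes for $a,b\leq j$, including the boundary case $j=\dim\mathfrak{z}(\mathfrak{g})+1$, which the paper leaves implicit.
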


\begin{proof}
This lemma follows from the fact that if we assume that $j\leq\dim
\mathfrak{z}\left(  \mathfrak{g}\right)  +1$ then the matrix $M\left(
\ell_{j}\right)  $ is simply a zero matrix of order $j$ which we consider as a
linear operator acting on the vector space $
\mathfrak{g}_{j}=%
\mathbb{R}
Z_{1}+\cdots+%
\mathbb{R}
Z_{j}.$ Thus its null-space is equal to the whole vector space $\mathfrak{g}_{j}.$
\end{proof}

Now, let us assume that
\[
j>\dim\mathfrak{z}\left(  \mathfrak{g}\right)  +1.
\]
Then the matrix $M\left(  \ell_{j}\right)  $ is equal to%
\begin{equation}
\left[
\begin{array}
[c]{ccccccc}%
0 & \cdots & 0 & 0 & 0 & \cdots & 0\\
\vdots &  & \vdots & \vdots & \vdots &  & \vdots\\
0 & \cdots & 0 & 0 & 0 & \cdots & 0\\ 
0 & \cdots & 0 & 0 & \ell_{j}\left[  Z_{\dim\mathfrak{z}\left(  \mathfrak{g}%
\right)  +1},Z_{\dim\mathfrak{z}\left(  \mathfrak{g}\right)  +2}\right]   &
\cdots & \ell_{j}\left[  Z_{\dim\mathfrak{z}\left(  \mathfrak{g}\right)
+1},Z_{j}\right]  \\
0 & \cdots & 0 & -\ell_{j}\left[  Z_{\dim\mathfrak{z}\left(  \mathfrak{g}%
\right)  +1},Z_{\dim\mathfrak{z}\left(  \mathfrak{g}\right)  +2}\right]   &
0 &  & \ell_{j}\left[  Z_{\dim\mathfrak{z}\left(  \mathfrak{g}\right)
+2},Z_{j}\right]  \\
\vdots &  & \vdots & \vdots & \vdots & \ddots & \vdots\\
0 & \cdots & 0 & -\ell_{j}\left[  Z_{\dim\mathfrak{z}\left(  \mathfrak{g}%
\right)  +1},Z_{j}\right]   & -\ell_{j}\left[  Z_{\dim\mathfrak{z}\left(
\mathfrak{g}\right)  +2},Z_{j}\right]   & \cdots & 0
\end{array}
\right]  .\label{Me}%
\end{equation}
We will consider the following submatrix of $M\left(  \ell_{j}\right)  $ which
we denote by
\[
M_{0}\left(  \ell_{j}\right)  =\left[
\begin{array}
[c]{cccc}%
0 & \ell_{j}\left[  Z_{\dim\mathfrak{z}\left(  \mathfrak{g}\right)
+1},Z_{\dim\mathfrak{z}\left(  \mathfrak{g}\right)  +2}\right]   & \cdots &
\ell_{j}\left[  Z_{\dim\mathfrak{z}\left(  \mathfrak{g}\right)  +1}%
,Z_{j}\right]  \\
-\ell_{j}\left[  Z_{\dim\mathfrak{z}\left(  \mathfrak{g}\right)  +1}%
,Z_{\dim\mathfrak{z}\left(  \mathfrak{g}\right)  +2}\right]   & 0 &  &
\ell_{j}\left[  Z_{\dim\mathfrak{z}\left(  \mathfrak{g}\right)  +2}%
,Z_{j}\right]  \\
\vdots & \vdots & \ddots & \vdots\\
-\ell_{j}\left[  Z_{\dim\mathfrak{z}\left(  \mathfrak{g}\right)  +1}%
,Z_{j}\right]   & -\ell_{j}\left[  Z_{\dim\mathfrak{z}\left(  \mathfrak{g}%
\right)  +2},Z_{j}\right]   & \cdots & 0
\end{array}
\right]
\]
where $j=\dim\mathfrak{z}\left(  \mathfrak{g}\right)  +s_{j},$%
\begin{equation}
M\left(  \ell_{j}\right)  =\left[
\begin{array}
[c]{cc}%
0 & 0\\
0 & M_{0}\left(  \ell_{j}\right)
\end{array}
\right]  \label{block}%
\end{equation}
and $s_{j}>1.$ We observe that if $j_1\leq j_2$ then $M_0(\ell_{j_1})$ is a submatrix of $M_0(\ell_{j_2}).$ It is fairly easy to check that the above implies that if $j=\dim\left(
\mathfrak{z}\left(  \mathfrak{g}\right)  \right)  +s_{j}$ then
\[
\mathrm{nullspace}\left(  M\left(  \ell_{\dim\left(  \mathfrak{z}\left(
\mathfrak{g}\right)  \right)  +s_{j}}\right)  \right)  =\mathfrak{z}\left(
\mathfrak{g}\right)  +\mathrm{nullspace}\left(  M_{0}\left(  \ell_{\dim\left(
\mathfrak{z}\left(  \mathfrak{g}\right)  \right)  +s_{j}}\right)  \right)  .
\]
Now, let
\[
\mathbf{I}\left(  \ell\right)  =\mathbf{I=}\left\{  1<s_{j}\leq n-\dim\left(
\mathfrak{z}\left(  \mathfrak{g}\right)  \right)  :\mathrm{rank}\left(
M_{0}\left(  \ell_{\dim\left(  \mathfrak{z}\left(  \mathfrak{g}\right)
\right)  +s_{j}}\right)  \right)  =s_{j}\right\}  \subset\left\{
2,3,\cdots,\dim\left(  \mathfrak{g}\right)  \right\}
\]
Clearly, it is possible for the set $\mathbf{I}$ to be empty. However, in the case where $\mathbf{I}$ is nonempty, it makes sense
to attempt to refine Formula \ref{pol}.

\begin{lemma}
If $s_{j}\in\mathbf{I}$ then
\[
\mathrm{nullspace}\left(  M\left(  \ell_{j}\right)  \right)
=\mathrm{nullspace}\left(  M\left(  \ell_{\dim\mathfrak{z}\left(
\mathfrak{g}\right)  +s_{j}}\right)  \right)  =\mathfrak{z}\left(
\mathfrak{g}\right ).
\]

\end{lemma}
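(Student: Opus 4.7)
The plan is to reduce the claim directly to the block-decomposition formula (\ref{block}), which the text has just established, combined with a one-line linear-algebra observation about full-rank skew-symmetric matrices.

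First I would note that the two matrices $M(\ell_j)$ and $M(\ell_{\dim\mathfrak{z}(\mathfrak{g})+s_j})$ appearing in the statement are literally the same matrix, since by the convention $j=\dim\mathfrak{z}(\mathfrak{g})+s_j$ set up just before the definition of $\mathbf{I}$. So there is really only one equality to prove, namely
\[
\mathrm{nullspace}\left(M(\ell_j)\right)=\mathfrak{z}(\mathfrak{g}).
\]

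Next, I would invoke the block decomposition (\ref{block}) together with the identity
\[
\mathrm{nullspace}\left(M(\ell_j)\right)=\mathfrak{z}(\mathfrak{g})+\mathrm{nullspace}\left(M_0(\ell_j)\right),
\]
which was derived just above the definition of $\mathbf{I}$. So the whole problem collapses to showing that $\mathrm{nullspace}(M_0(\ell_j))=\{0\}$. But $M_0(\ell_j)$ is, by construction, a skew-symmetric matrix of order $s_j$, and the hypothesis $s_j\in\mathbf{I}$ says precisely that $\mathrm{rank}(M_0(\ell_j))=s_j$. A square matrix of full rank has trivial nullspace, so the conclusion follows immediately.

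There is essentially no obstacle here: the lemma is a direct bookkeeping consequence of the block structure (\ref{block}) plus the rank hypothesis built into the definition of $\mathbf{I}$. The only thing worth flagging is the implicit consistency check that $s_j$ must be even whenever it belongs to $\mathbf{I}$ (a skew-symmetric matrix of odd order is automatically singular), but this is not needed for the proof itself.
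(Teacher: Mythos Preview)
Your proposal is correct and follows essentially the same route as the paper: the paper's proof simply observes that $s_j\in\mathbf{I}$ forces $M_0(\ell_{\dim\mathfrak{z}(\mathfrak{g})+s_j})$ to be a skew-symmetric matrix of even full rank, hence with trivial null-space, so that $\mathrm{nullspace}(M(\ell_j))=\mathfrak{z}(\mathfrak{g})$. Your write-up is slightly more explicit about the identification $j=\dim\mathfrak{z}(\mathfrak{g})+s_j$ and the use of the block decomposition, but the argument is the same.
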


\begin{proof}
If $s_{j}\in\mathbf{I}$ then $M_{0}\left(  \ell_{\dim\mathfrak{z}\left(
\mathfrak{g}\right)  +s_{j}}\right)  $ is a skew-symmetric matrix of even
full-rank. Thus, its null-space is the trivial vector space. Therefore, $$
\mathrm{nullspace}\left(  M\left(  \ell_{j}\right)  \right)  =\mathfrak{z}%
\left(  \mathfrak{g}\right).$$

\end{proof}

Appealing to the above lemma, and to Formula \ref{pol}, the following is immediate.

\begin{theorem}
\label{t1}Let $\ell$ be a linear functional in $\mathfrak{g}^{\ast}.$ A Vergne
polarizing subalgebra subordinated to the linear functional $\ell$ is
\[
\mathfrak{p}\left(  \ell\right)  =\mathfrak{z}\left(  \mathfrak{g}\right)  +\mathbb{R}Z_{\mathrm{dim\mathfrak{z(g)}+1}}+{\displaystyle\sum\limits_{s_{j}\notin\mathbf{I}(\ell)}}
\mathrm{nullspace}\left(  M_{0}\left(  \ell_{\dim\mathfrak{z}\left(
\mathfrak{g}\right)  +s_{j}}\right)  \right)
\]

\end{theorem}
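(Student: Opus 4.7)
The plan is to apply Formula (\ref{pol}) and partition the summation index $j\in\{1,\ldots,n\}$ into three disjoint ranges that align with the three regimes covered by the preceding lemmas, then collect like terms to obtain the stated formula.

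Concretely, I would consider the cases (i) $1\le j\le\dim\mathfrak z(\mathfrak g)+1$, (ii) $j>\dim\mathfrak z(\mathfrak g)+1$ with $s_j:=j-\dim\mathfrak z(\mathfrak g)\in\mathbf I(\ell)$, and (iii) $j>\dim\mathfrak z(\mathfrak g)+1$ with $s_j\notin\mathbf I(\ell)$. In case (i), the first lemma gives $\mathrm{nullspace}(M(\ell_j))=\mathbb{R}Z_1+\cdots+\mathbb{R}Z_j$; these subspaces are nested, so their sum as $j$ ranges over (i) equals the largest one, $\mathbb{R}Z_1+\cdots+\mathbb{R}Z_{\dim\mathfrak z(\mathfrak g)+1}$, which is exactly $\mathfrak z(\mathfrak g)+\mathbb{R}Z_{\dim\mathfrak z(\mathfrak g)+1}$ because the Malcev basis passes through the center. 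In case (ii), the second lemma gives $\mathrm{nullspace}(M(\ell_j))=\mathfrak z(\mathfrak g)$, which is already captured by the (i)-contribution and adds nothing new. In case (iii), the block decomposition (\ref{block}) gives $\mathrm{nullspace}(M(\ell_j))=\mathfrak z(\mathfrak g)+\mathrm{nullspace}(M_0(\ell_j))$, so each such $j$ contributes exactly the summand $\mathrm{nullspace}(M_0(\ell_{\dim\mathfrak z(\mathfrak g)+s_j}))$ beyond the already-collected center.

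Summing the three contributions and simplifying then produces the formula asserted in Theorem \ref{t1}. The argument is essentially bookkeeping; the only substantive point to verify is that the strong Malcev basis passing through the center indeed satisfies $\mathfrak z(\mathfrak g)=\mathbb{R}Z_1+\cdots+\mathbb{R}Z_{\dim\mathfrak z(\mathfrak g)}$, which is what justifies both the block form (\ref{Me}) used in case (iii) and the identification made in case (i). Because the three lemmas do all the real work, there is no genuine obstacle, which explains why the author calls the conclusion immediate.
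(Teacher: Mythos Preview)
Your proposal is correct and is exactly the argument the paper has in mind: you partition the sum in Formula~(\ref{pol}) according to the three regimes handled by the two lemmas and the block decomposition~(\ref{block}), then absorb the redundant $\mathfrak z(\mathfrak g)$ contributions. This is precisely why the paper calls the theorem immediate from the preceding lemma and Formula~(\ref{pol}); you have simply made the bookkeeping explicit.
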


\section{Vergne Polarizing Subalgebras for Free Nilpotent Lie algebras}

In this section, we will adapt the algorithms provided in the previous section
to a class of nilpotent Lie algebras known as free nilpotent Lie algebras of
step-two. Applying the refined algorithms described in Theorem \ref{t1}, we
are able to provide very simple descriptions of Vergne polarizing algebras
subordinated to a family of linear functionals in a Zariski open (dense) subset of the
linear dual of the corresponding Lie algebra. Let $\mathfrak{g}$ be the free
nilpotent Lie algebra of step-two on $m$ generators $\left(  m>1\right)  $. If
$\left\{  Z_{1},\cdots,Z_{m}\right\}  $ is the generating set for
$\mathfrak{g}$ then 
\[
\mathfrak{g}=\mathfrak{z}\left(  \mathfrak{g}\right)  +%
\mathbb{R}
\text{-span }\left\{  Z_{1},\cdots,Z_{m}\right\}
\]
such that $\mathfrak{z}\left(  \mathfrak{g}\right)  =\text{ }%
\mathbb{R}
\text{-span }\left\{  Z_{ik}:1\leq i\leq m\text{ and }i<k\leq m\right\}  .$
The non-trivial Lie brackets of this Lie algebra are described as follows.
\[
\left[  Z_{i},Z_{j}\right]  =Z_{ij\text{ }}\text{for }\left(  1\leq i\leq
m\text{ and }i<j\leq m\right)  .
\]
It is then easy to see that
\[
\dim\left(  \mathfrak{z}\left(  \mathfrak{g}\right)  \right)  =\frac{m\left(
m-1\right)  }{2}.
\]
In these settings, we define recursively the matrix
\[
M_{0}\left(  \ell_{j}\right)  =\left[
\begin{array}
[c]{ccccc}%
0 & \ell_{j}\left[  Z_{1},Z_{2}\right]   & \cdots & \ell_{j}\left[
Z_{1},Z_{j-1}\right]   & \ell_{j}\left[  Z_{1},Z_{j}\right]  \\
\ell_{j}\left[  Z_{2},Z_{1}\right]   & 0 & \cdots & \ell_{j}\left[
Z_{2},Z_{j-1}\right]   & \ell_{j}\left[  Z_{2},Z_{j}\right]  \\
\vdots & \vdots & \ddots & \vdots & \vdots\\
\ell_{j}\left[  Z_{j-1},Z_{1}\right]   & \ell_{j}\left[  Z_{j-1},Z_{2}\right]
& \cdots & 0 & \ell_{j}\left[  Z_{j-1},Z_{j}\right]  \\
\ell_{j}\left[  Z_{j},Z_{1}\right]   & \ell_{j}\left[  Z_{j},Z_{2}\right]   &
\cdots & \ell_{j}\left[  Z_{j},Z_{j-1}\right]   & 0
\end{array}
\right]
\]
such that
\[
M_{0}\left(  \ell_{j}\right)  =\left[
\begin{array}
[c]{cc}%
M_{0}\left(  \ell_{j-1}\right)   & v\left(  \ell_{j}\right)  \\
w\left(  \ell_{j}\right)   & 0
\end{array}
\right]
\]
where
\[
v\left(  \ell_{j}\right)  =\left[
\begin{array}
[c]{c}%
\ell_{j}\left[  Z_{1},Z_{j}\right]  \\
\ell_{j}\left[  Z_{2},Z_{j}\right]  \\
\vdots\\
\ell_{j}\left[  Z_{j-1},Z_{j}\right]
\end{array}
\right]  \text{ and }w\left(  \ell_{j}\right)  =\left[
\begin{array}
[c]{cccc}%
\ell_{j}\left[  Z_{j},Z_{1}\right]   & \ell_{j}\left[  Z_{j},Z_{2}\right]   &
\cdots & \ell_{j}\left[  Z_{j},Z_{j-1}\right]
\end{array}
\right]  .
\]
From now on, we will assume that $\ell\in\mathfrak{g}^{\ast}$ and
\begin{equation}
\ell\in\left\{  f\in\mathfrak{g}^{\ast}:\det M_{0}\left(  f_{j-1}\right)
\neq0\text{ for odd }j>1\right\}  \label{Zariski}.
\end{equation}
Let $k$ be a natural number smaller than the dimension of the Lie algebra $\mathfrak{g}.$ Furthermore, for each $k$ we define the embedding map $\mu_k:\mathbb{R}^k\to\mathfrak{g}$ such that $\mu_k$ is a map which sends the column vector $\left[z_1,\cdots,z_k\right]^{t}$ to the vector $z_1Z_1+\cdots z_k Z_k\in \mathfrak{g}$ and  $\left[z_1,\cdots,z_k\right]^{t}$ is the transpose of $\left[z_1,\cdots,z_k\right].$
\begin{lemma}
\label{l1}If $j>1$ is odd then the null-space of the matrix $M_{0}\left(
\ell_{j}\right)  $ is equal to
\[%
\mathbb{R}
\left(  Z_{j}-\mu_{j-1}\left(\left[
\begin{array}
[c]{cccc}%
0 & \ell_{j}\left[  Z_{1},Z_{2}\right]  & \cdots & \ell_{j}\left[
Z_{1},Z_{j-1}\right] \\
\ell_{j}\left[  Z_{2},Z_{1}\right]  & 0 & \cdots & \ell_{j}\left[
Z_{2},Z_{j-1}\right] \\
\vdots & \vdots & \ddots & \vdots\\
\ell_{j}\left[  Z_{j-1},Z_{1}\right]  & \ell_{j}\left[  Z_{j-1},Z_{2}\right]
& \cdots & 0
\end{array}\right]
^{-1}\left[
\begin{array}
[c]{c}%
\ell_{j}\left[  Z_{1},Z_{j}\right] \\
\ell_{j}\left[  Z_{2},Z_{j}\right] \\
\vdots\\
\ell_{j}\left[  Z_{j-1},Z_{j}\right]
\end{array}
\right]  \right)  \right)  .
\]
If $j$ is even then
\[
\mathrm{nullspace}\left(  M_{0}\left(  \ell_{j}\right)  \right)  =\left\{
0\right\}  .
\]

\end{lemma}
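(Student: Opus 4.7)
The plan is to exploit the recursive block decomposition already recorded in the setup together with the invertibility supplied by the standing hypothesis (\ref{Zariski}). First I would observe that (\ref{Zariski}) in fact forces $\det M_0(\ell_k) \neq 0$ for every even $k \geq 2$: applying the stated condition to the odd index $k+1 > 1$ gives exactly this. This immediately disposes of the case $j$ even, since $M_0(\ell_j)$ is then nonsingular and its null-space is $\{0\}$.

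For $j$ odd with $j > 1$, I would partition a candidate null-vector of $M_0(\ell_j)$ into a column $x \in \R^{j-1}$ (its first $j-1$ coordinates) and a scalar $x_j$ (its last coordinate). Using $w(\ell_j) = -v(\ell_j)^t$ (skew-symmetry), the equation $M_0(\ell_j)\binom{x}{x_j} = 0$ decouples into
\[
M_0(\ell_{j-1})\, x + x_j\, v(\ell_j) = 0, \qquad v(\ell_j)^t\, x = 0.
\]
Since $j - 1$ is even and at least $2$, the case already handled gives $M_0(\ell_{j-1})$ invertible, and the first equation forces $x = -x_j\, M_0(\ell_{j-1})^{-1} v(\ell_j)$.

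The sole conceptual step is the compatibility check, namely that the second equation becomes automatic. Substituting the expression for $x$, it reads
\[
x_j\, v(\ell_j)^t\, M_0(\ell_{j-1})^{-1}\, v(\ell_j) = 0.
\]
But the inverse of a nonsingular skew-symmetric matrix is itself skew-symmetric, and the quadratic form of any skew-symmetric operator vanishes identically; hence this identity holds for every $x_j \in \R$. The null-space is therefore one-dimensional, spanned by the vector obtained with $x_j = 1$, in which case $x = -M_0(\ell_{j-1})^{-1} v(\ell_j)$. Under $\mu_j$ the corresponding element of $\mathfrak{g}$ is precisely $Z_j - \mu_{j-1}\bigl(M_0(\ell_{j-1})^{-1} v(\ell_j)\bigr)$, matching the statement.

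The main source of friction I anticipate is purely notational: keeping track of the sign arising from $w(\ell_j) = -v(\ell_j)^t$, and correctly reindexing (\ref{Zariski}) so that it supplies invertibility of $M_0$ at every even index needed in the odd-$j$ case. Once the block form and the skew-symmetry are in place, the whole argument reduces to the short linear-algebra computation above, with the only substantive input being the skew-symmetry of $M_0(\ell_{j-1})^{-1}$.
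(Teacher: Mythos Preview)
Your argument is correct and follows essentially the same route as the paper: both exploit the block form $M_0(\ell_j)=\begin{bmatrix}M_0(\ell_{j-1}) & v(\ell_j)\\ w(\ell_j) & 0\end{bmatrix}$ together with the invertibility of $M_0(\ell_{j-1})$ supplied by (\ref{Zariski}). The only difference is presentational: the paper obtains the null-space by left-multiplying by $\begin{bmatrix}M_0(\ell_{j-1})^{-1} & 0\\ 0 & 0\end{bmatrix}$ and reading off the row-reduced form, whereas you solve the block system directly and verify the second equation via $v^t M_0(\ell_{j-1})^{-1}v=0$ from skew-symmetry of the inverse---a step the paper leaves implicit.
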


\begin{proof} For any linear functional $\ell$ satisfying (\ref{Zariski})), we will show that the set $\mathbf{I}\left(  \ell\right)  $ only contains even
indices. First of all, let us suppose that $j$ is odd. Then $M_{0}\left(  \ell
_{j-1}\right)$ is a skew-symmetric matrix of even \textbf{full}-rank (see
(\ref{Zariski})). So
\[
\left[
\begin{array}
[c]{cc}%
M_{0}\left(  \ell_{j-1}\right)  ^{-1} & 0\\
0 & 0
\end{array}
\right]  \left[
\begin{array}
[c]{cc}%
M_{0}\left(  \ell_{j-1}\right)  & v\left(  \ell_{j}\right) \\
w\left(  \ell_{j}\right)  & 0
\end{array}
\right]  =\left[
\begin{array}
[c]{cc}%
I & M_{0}\left(  \ell_{j-1}\right)  ^{-1}v\left(  \ell_{j}\right) \\
0 & 0
\end{array}
\right]
\]
and\
\[
\left[
\begin{array}
[c]{cc}%
I & M_{0}\left(  \ell_{j-1}\right)  ^{-1}v\left(  \ell_{j}\right) \\
0 & 0
\end{array}
\right]
\]
is just the row-reduced form of the matrix
\[
\left[
\begin{array}
[c]{cc}%
M_{0}\left(  \ell_{j-1}\right)  & v\left(  \ell_{j}\right) \\
w\left(  \ell_{j}\right)  & 0
\end{array}
\right]  .
\]
Therefore,
\[
\mathrm{nullspace}\left(  M_{0}\left(  \ell_{j}\right)  \right)  =%
\mathbb{R}
\left(  Z_{j}-\mu_{j-1}\left(M_{0}\left(  \ell_{j-1}\right)  ^{-1}v\left(  \ell_{j}\right)
\right)\right)
\]
and the above is precisely equal to
\[%
\mathbb{R}
\left(  Z_{j}-\mu_{j-1}\left(\left[
\begin{array}
[c]{cccc}%
0 & \ell_{j}\left[  Z_{1},Z_{2}\right]  & \cdots & \ell_{j}\left[
Z_{1},Z_{j-1}\right] \\
\ell_{j}\left[  Z_{2},Z_{1}\right]  & 0 & \cdots & \ell_{j}\left[
Z_{2},Z_{j-1}\right] \\
\vdots & \vdots & \ddots & \vdots\\
\ell_{j}\left[  Z_{j-1},Z_{1}\right]  & \ell_{j}\left[  Z_{j-1},Z_{2}\right]
& \cdots & 0
\end{array}\right]
^{-1}\left[
\begin{array}
[c]{c}%
\ell_{j}\left[  Z_{1},Z_{j}\right] \\
\ell_{j}\left[  Z_{2},Z_{j}\right] \\
\vdots\\
\ell_{j}\left[  Z_{j-1},Z_{j}\right]
\end{array}
\right]  \right)  \right)  .
\]
Secondly, by assumption, if $j$ is even then $M_{0}\left(  \ell_{j}\right)  $ is a skew-symmetric
matrix of even full-rank. Therefore the null-space of this matrix is trivial.
\end{proof}

\begin{proposition}
If $m$ is odd ($m=2s+1$ for some natural number $s$) then%
\[
\mathfrak{p}\left(  \ell\right)  =\mathfrak{z}\left(  \mathfrak{g}\right)
\text{ }+\text{ }%
\mathbb{R}
Z_{1}\text{ }+\left(
{\displaystyle\sum\limits_{k=1}^{s}}
\mathbb{R}
\left(  Z_{2k+1}-\mu_{2k}\left(M_{0}\left(  \ell_{2k}\right)  ^{-1}v\left(  \ell
_{2k+1}\right)  \right)  \right)\right)  .
\]
If $m$ is even ($m=2s$ for some natural number $s$) then
\[
\mathfrak{p}\left(  \ell\right)  =\mathfrak{z}\left(  \mathfrak{g}\right)
\text{ }+\text{ }%
\mathbb{R}
Z_{1}\text{ }+\left(
{\displaystyle\sum\limits_{k=1}^{s-1}}
\mathbb{R}
\left(  Z_{2k+1}-\mu_{2k}\left(M_{0}\left(  \ell_{2k}\right)  ^{-1}v\left(  \ell
_{2k+1}\right)  \right)  \right) \right) .
\]

\end{proposition}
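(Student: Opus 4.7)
The plan is to specialize Theorem \ref{t1} to the free nilpotent step-two Lie algebra on $m$ generators and then read off the contributions using Lemma \ref{l1}. First I would reconcile the two indexings: since $\mathfrak{z}(\mathfrak{g})$ is spanned by the $Z_{ik}$'s, a strong Malcev basis passing through the center lists the central brackets first and the generators $Z_1,\dots,Z_m$ last. Consequently the ``$M_0$'' matrices built from the generators (as defined in Section~3) coincide with those that appear in Theorem \ref{t1}, and the index $s_j$ there is exactly the generator index $j\in\{1,\dots,m\}$ used in Lemma \ref{l1}. In particular $Z_{\dim\mathfrak{z}(\mathfrak{g})+1}$ is the first generator $Z_1$.

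Next I would identify the set $\mathbf{I}(\ell)$ exactly. For $\ell$ in the Zariski open set (\ref{Zariski}), Lemma \ref{l1} already implies that every even index $j$ makes $M_0(\ell_j)$ a full-rank skew-symmetric matrix, so all even $j\in\{2,\dots,m\}$ lie in $\mathbf{I}(\ell)$. Conversely, for odd $j>1$, skew-symmetry forces the rank of $M_0(\ell_j)$ to be even, while the full-rank even-sized principal submatrix $M_0(\ell_{j-1})$ guarantees rank at least $j-1$; hence the rank is exactly $j-1<j$ and $j\notin\mathbf{I}(\ell)$. Therefore the indices contributing to the sum in Theorem \ref{t1} are precisely the odd integers $j$ with $3\leq j\leq m$.

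Now I apply Lemma \ref{l1} directly: for each odd $j>1$ in the relevant range, $\mathrm{nullspace}(M_0(\ell_j))$ is the line through
\[
Z_j-\mu_{j-1}\bigl(M_0(\ell_{j-1})^{-1}v(\ell_j)\bigr).
\]
Substituting into Theorem \ref{t1}, the polarizing subalgebra is
\[
\mathfrak{p}(\ell)=\mathfrak{z}(\mathfrak{g})+\mathbb{R}Z_1+\sum_{\substack{j\text{ odd}\\3\leq j\leq m}}\mathbb{R}\bigl(Z_j-\mu_{j-1}(M_0(\ell_{j-1})^{-1}v(\ell_j))\bigr).
\]
Finally I would reindex $j=2k+1$: when $m=2s+1$ the relevant $k$ range is $1\leq k\leq s$, and when $m=2s$ the range is $1\leq k\leq s-1$, producing exactly the two displayed formulas.

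I do not expect any real obstacle beyond the bookkeeping of the two index conventions and the verification that the parity argument for $\mathbf{I}(\ell)$ is sharp. The only step that requires a little care is confirming that no odd index $j>1$ accidentally lands in $\mathbf{I}(\ell)$; this is handled entirely by the observation that the rank of a skew-symmetric matrix is even combined with the hypothesis (\ref{Zariski}) on the even-sized principal minors.
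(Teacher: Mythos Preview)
Your proposal is correct and follows exactly the route the paper takes: the paper's own proof is the single sentence ``a direct application of Lemma \ref{l1} and Theorem \ref{t1},'' and you have simply spelled out that application, including the bookkeeping that identifies $\mathbf{I}(\ell)$ with the even generator indices and the reindexing $j=2k+1$. There is nothing to add or correct.
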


The above proposition is a direct application of Lemma \ref{l1} and Theorem
\ref{t1}.
\section*{A Mathematica Program}
Let $\mathfrak{g}$ be a real nilpotent Lie algebra of dimension $n$ with a fixed strong Malcev basis: $Z_1,Z_2,\cdots,Z_n.$ We will present a program written in Mathematica which can be used to compute the Vergne polarizing subalgebra with respect to a linear functional $\ell\in
\mathfrak{g}^{\ast}$. The only argument for this program is the matrix
\[
\mathbf{M}\left(  \ell\right)
\]
The output of this program is a spanning set for a polarizing subalgebra subordinated to $\ell.$
\begin{figure}[!htbp]
\begin{center}
\includegraphics[scale=1]{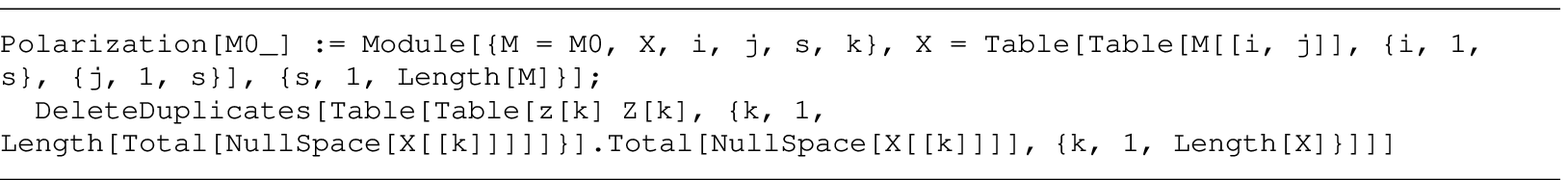}
\end{center}
\caption[A program written in Mathematica]{A program written in Mathematica}%
\label{Serie}%
\end{figure}

Here are some actual Mathematica outputs
\begin{figure}[!htbp]
\begin{center}
\includegraphics[scale=1]{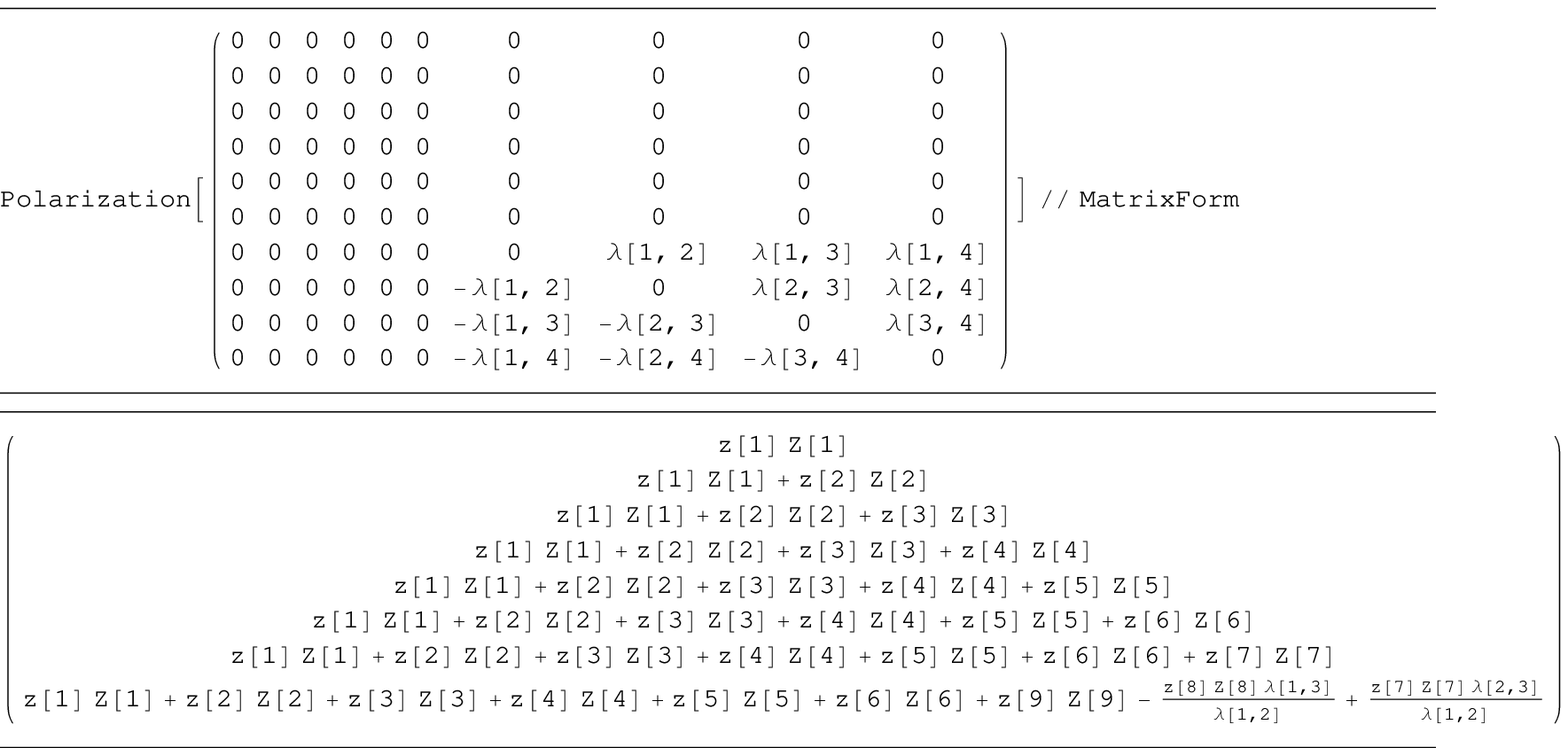}
\end{center}
\caption[Some Mathematica Outputs]{Some Mathematica Outputs}%
\label{Serie}%
\end{figure}

\section*{acknowledgment}
We thank Michel Duflo for bringing to our attention that Niels Pedersen has already written programs in REDUCE to compute polarizing subalgebras for all nilpotent Lie algebras of dimensions less than seven, and we also thank him for supplying reference \cite{Pedersen}.

\end{document}